\documentclass[letterpaper, 10 pt, conference]{IEEEtran}

\usepackage[sort,compress]{cite}

\IEEEoverridecommandlockouts

\usepackage{amsmath,amssymb,amsfonts}
\usepackage{graphicx}
\usepackage{mathtools}
\usepackage{bm}
\usepackage{xcolor}
\usepackage{nicefrac}
\usepackage[activate={true,nocompatibility},final,tracking=true,kerning=true,spacing=true,factor=1100,stretch=10,shrink=10]{microtype}
\usepackage{eucal}
\usepackage{algorithm}
\usepackage{algorithmic}
\usepackage{physics}
\usepackage[scr]{rsfso}
\usepackage{lipsum}
\usepackage{bm}
\usepackage{optidef}

\usepackage{geometry}
\usepackage{amsthm}

\usepackage[colorlinks]{hyperref}
\usepackage{cleveref}

\hypersetup{allcolors=[RGB]{220 20 60}}

\newtheorem{theorem}{Theorem}

\newtheorem{proposition}{Proposition}
\newtheorem{assumption}{Assumption}
\newtheorem{definition}{Definition}
\newtheorem{remark}{Remark}

\crefformat{equation}{(#2#1#3)}
\crefname{assumption}{assumption}{assumptions}

\usepackage{eucal}
\usepackage{dsfont}
\usepackage{xspace}
\usepackage{xifthen}

\newcommand{\set}[1]{\mathbb{#1}}
\newcommand{\func}[1]{\mathds{#1}}
\newcommand{\oper}[1]{\mathcal{#1}}
\newcommand{\dist}[1]{\mathcal{#1}}

\DeclareMathOperator*{\minimize}{minimize}

\newcommand{\KL}{\func{D}}

\newcommand{\Cov}{\operatorname{Cov}}

\newcommand{\transp}[1]{{#1}^{\mathsf{T}}}

\newcommand{\R}{\set{R}}

\newcommand{\E}{\func{E}}
\newcommand{\F}{\func{F}}
\newcommand{\Z}{\func{Z}}

\newcommand{\scd}[1][]{\ifthenelse{\isempty{#1}}%
        {\textsc{Scd}}
        {\textsc{Scd{#1}}}
        }

\newcommand{\uline}[1]{\underline{#1}}

\newcommand{\opt}[1]{{#1}^{\star}}
\newcommand{\inv}[1]{{#1}^{-1}}

\newcommand{\define}{\coloneqq}

\newcommand{\MI}{\func{I}}

\newcommand{\Fbeta}{F_\beta}

\newcommand{\sspace}{\set{X}}
\newcommand{\cspace}{\set{U}}
\newcommand{\ospace}{\set{Y}}

\newcommand{\Jtrj}{J_{\mathrm{trj}}}
\newcommand{\JJtrj}{\oper{J}_{\mathrm{trj}}}

\newcommand{\seq}{\boldsymbol}
\newcommand{\causals}{\Pi}

\newcommand{\Jirr}{J_{\mathrm{irr}}}

\newcommand{\Jol}{J_{\mathrm{ol}}}
\newcommand{\Jsc}{J_{\mathrm{sc}}}

\makeatletter
\def\ps@IEEEtitlepagestyle{%
  \def\@oddfoot{\mycopyrightnotice}%
  \def\@evenfoot{}%
}
\def\mycopyrightnotice{%
  {\hfill \footnotesize 978-1-4673-9563-2/15/\$31.00 \copyright 2015 IEEE\hfill}
}
\makeatother

\title{\LARGE \bf
Fundamental Limits for Sensor-Based Control via the \\ Gibbs Variational Principle
}

\author{Vincent Pacelli and Evangelos A. Theodorou
\thanks{This article has been accepted for publication in IEEE Control Systems Letters. \textcopyright2026 IEEE. DOI \texttt{10.1109/LCSYS.2026.3710535}}
\thanks{The authors are associated with the Daniel Guggenheim School of Aerospace Engineering, Georgia Institute of
Technology, Atlanta, GA, USA, 30308.
        \mbox{\tt\small \{vpacelli, evangelos.theodorou\}@gatech.edu}}%
}

\usepackage{tikz}

\newcommand\copyrighttext{%
  \footnotesize \textcopyright \the\year{} IEEE. This article has been accepted for publication in IEEE Control Systems Letters. This is the author's version and has not been fully edited. Content may change prior to publication. DOI \texttt{10.1109/LCSYS.2026.3710535.}}

\begin{document}

\maketitle

\thispagestyle{empty}
\pagestyle{empty}

\begin{abstract}
Fundamental limits on the performance of feedback controllers are essential for benchmarking algorithms, guiding sensor selection, and certifying task feasibility---yet few general-purpose tools exist for computing them. Existing information-theoretic approaches overestimate the information a sensor must provide by evaluating it against the uncontrolled system, producing bounds that degrade precisely when feedback is most valuable. We derive a lower bound on the minimum expected cost of any causal feedback controller under partial observations by applying the Gibbs variational principle to the joint path measure over states and observations. The bound applies to nonlinear, nonholonomic, and hybrid dynamics with unbounded costs and admits a self-consistent refinement: any good controller concentrates the state, which limits the information the sensor can extract, which tightens the bound. The resulting fixed-point equation has a unique solution computable by bisection, and we provide conditions under which the free energy minimization is provably convex, yielding a certifiably correct numerical bound. On a scalar LQG problem the self-consistent bound captures over 80\% of the known optimal cost at moderate sensor noise, and on a nonlinear Dubins car tracking problem it remains informative across all noise levels where a bound using the uncontrolled state distribution is vacuous.
\end{abstract}

\section{Introduction}
\label{sec:intro}
Feedback control systems must operate with imperfect information---even sensors of the highest quality produce noisy and incomplete observations. Measurement uncertainty places a \emph{fundamental limit} on the performance any feedback controller can achieve, regardless of its computational sophistication. Characterizing these limits allows engineers to benchmark controllers against what is physically achievable, guide sensor selection and placement, and certify whether a task is feasible with a given sensing modality.

Few general-purpose tools exist for computing such bounds. Classical
results---the Nyquist--Shannon sampling theorem~\cite{Shannon49},
Bode's sensitivity integral~\cite{Bode45}, data-rate theorems for
networked control~\cite{Nair07}, and perception-latency limits for agile flight~\cite{Falanga19}---each address a particular problem class. The task-relevant information potential (TRIP) framework~\cite{Majumdar22, Majumdar23} provides the first general-purpose approach, lower-bounding the optimal cost of any output-feedback controller via a generalization of Fano's inequality~\cite{Gerchinovitz20}. While broadly applicable, the TRIP bound requires bounded costs and a finite control space. More fundamentally, it computes mutual information (MI) using the
\emph{open-loop} state distribution---which arises in the absence of feedback. In most instances, good feedback controllers concentrate the state in low-cost regions, meaning the open-loop distribution vastly overestimates the required sensor information. This structural issue is worst when feedback control is the most effective.

The connection between stochastic optimal control and statistical mechanics has been exploited to \emph{compute} optimal controllers under structural assumptions. In particular, free energy duality and the Kullback-Leibler (KL) divergence play a central role in linearly-solvable optimal control~\cite{Todorov06}, path integral control~\cite{Kappen05, Theodorou12}, risk-sensitive formulations~\cite{DaiPra96}, fully probabilistic control design~\cite{Karny96, Karny05}, and thermodynamic analyses of stochastic control~\cite{Chen20}. We observe that the same structure---specifically, the Gibbs
variational principle (GVP)---can be used to \emph{bound} the optimal
cost. The TRIP bound is a special case of the GVP applied to the
joint path measure over states and observations. This connection has three consequences: (i) the bound extends immediately to unbounded costs and continuous control spaces, (ii) the free energy retains the full cost distribution through the partition function rather than collapsing it to a binary threshold, and is therefore never looser, and (iii) most importantly, it reveals a self-consistency structure invisible in the Fano-based derivation. Any controller that achieves low cost concentrates the state---limiting the value of sensory information and tightening the bound. Specifically, this paper contributes:
\begin{enumerate}
    \item A \textbf{task-relevant free energy (TRFE) lower bound} (\Cref{thm:trfe}) on the
    minimum expected cost of any causal feedback controller operating
    under partial observations. The bound applies to nonlinear,
    nonholonomic, and hybrid dynamics with unbounded costs and
    continuous control spaces.

    \item A \textbf{self-consistent refinement} (\Cref{thm:sc}) that
    exploits the coupling between achievable cost and sensor
    information. The resulting fixed-point equation has a unique
    solution that improves upon the open-loop estimate.

    \item \textbf{Conditions for certifiable computation}
    (\Cref{thm:convexity}) under which the free energy is strictly
    convex, guaranteeing that the numerical bound is a certified lower
    bound on the optimal cost. 
\end{enumerate}
We validate the bound on a scalar LQG problem where the optimal cost is known in closed form and on a nonlinear Dubins car tracking problem (\Cref{sec:example}), where the self-consistent bound remains informative across all noise levels while a bound using the uncontrolled state distribution is vacuous.

\section{Problem Formulation}
\label{sec:problem}
Denote by $\sspace = \set{R}^n$ and $\cspace \subseteq \set{R}^m$ the state and control spaces, respectively. The system evolves according to discrete-time nonlinear dynamics:
\begin{align}
    x_{t + 1} = f(x_t, u_t, w_t), && (x_0, w_{0:T}) \sim P_w.\label{eq:dyn}
\end{align}
Here, $P_w$ is the joint distribution of the initial condition and process noise. Let $\ospace = \set{R}^p$ be the output space of the sensor,
\begin{align}
    y_t = h(x_t) + v_t, && v_t \sim \dist{N}(0, \Sigma_v),\label{eq:sense}
\end{align}
where measurement noise $v_t$ is sampled i.i.d.\ and independent of the process noise $w_t$.\footnote{Throughout,
assume distributions admit densities with respect to a common measure, functions are measurable, and all expectations are finite.}

A non-negative cost function \mbox{$c_t: \sspace \times \cspace \to \R_+$} defines the \emph{control task}. We make two assumptions on its structure:
\begin{assumption}[Separability]
\label{ass:separability}
The cost function decomposes $c_t(x, u)$ into a state cost and a non-negative control cost: i.e., $c_t(x, u) = q_t(x) + r_t(u)$.
\end{assumption}

\begin{assumption}[Coercivity]
\label{ass:coercivity}
The state cost is coercive, namely: \mbox{$q_t(x) \geq a \norm{x}^2 - b$} for constants \mbox{$a > 0$, $b \geq 0$}.
\end{assumption}
Separability is used throughout to ensure the control cost is deterministic given $\seq{u}$, which is essential for the free energy factorization in \Cref{thm:trfe}, and to allow the state cost to define a cost-information certificate (\Cref{sec:self-consistent}). Coercivity ensures that a finite cost budget constrains the second moment of the state distribution---which is essential for establishing a self-consistent bound (\Cref{sec:self-consistent}).

Write $\seq{\xi} = (x_0, w_{0:T-1})$ for a realization of the stochastic primitives. The \emph{trajectory cost} under an open-loop control sequence $\seq{u} \in \cspace^T$ is
\begin{align}
    \Jtrj(\seq{u}; \seq{\xi}) \define\ &\Jtrj^x(\seq{u}; \seq{\xi}) + \Jtrj^u(\seq{u})\\
    \Jtrj^x(\seq{u}; \seq{\xi}) \define \sum_{t = 0}^T q_t(x_t), &\qquad \Jtrj^u(\seq{u}) \define \sum_{t = 0}^T r_t(u_t),\nonumber
\end{align}
where $x_{0:T}$ satisfies \eqref{eq:dyn} given $\seq{u}$ and $\seq{\xi}$. A \emph{causal output-feedback controller} is a sequence $\seq{\pi} = \pi_{0:T}$ where each $\pi_t$ maps the observation history $y_{0:t}$ to a control $u_t \in \cspace$. Denote the set of all such controllers by $\causals$. For $\seq{\pi} \in \causals$, write $\JJtrj[\seq{\pi}]$ for the trajectory cost evaluated using $u_t = \pi_t(y_{0:t})$, with state and control components $\JJtrj^x[\seq{\pi}]$ and $\JJtrj^u[\seq{\pi}]$ defined similarly. Expectations taken using a given $\seq{\pi}$ are written $\E_{\seq{\pi}}[\cdot]$.

The object of study is the optimal control problem:
\begin{align}
    \opt{J} \define \minimize_{\seq{\pi} \in \causals}\quad \E_{\seq{\pi}}\qty\Big[\JJtrj[\seq{\pi}]]\eqqcolon \func{J}[\seq{\pi}].\tag{OPT}\label{eq:cl_opt}
\end{align}
We seek a lower bound on $\opt{J}$ computable from the problem data alone.\footnote{No assumptions beyond density regularity are placed on $P_w$; see \Cref{thm:trfe} for the precise integrability condition.}

\section{Task-Relevant Free Energy Bound}
\label{sec:trfe}
The central idea is to compare the feedback controller's performance
against a reference measure \emph{in which sensor outputs are decoupled from
the state}. The KL divergence between the true and reference path
measures quantifies the information the controller utilizes, and the
Gibbs variational principle (GVP) implies that the cost reduction
relative to the reference cannot exceed this information.\footnote{Unlike standard variational formulations in risk-sensitive control~\cite{Theodorou12, DaiPra96}, where the optimization variable appears on the left of the KL divergence, here the true closed-loop measure $P$ appears on the left and the reference $Q$---which contains the free optimization variable $\nu_t$---on the right. This reversal arises because we bound the cost from below rather than solve for the optimal controller.} For any measurable $\ell : \set{X} \to \set{R}$ and $\beta > 0$,
define the \emph{partition function}
$\Z_\beta[\ell] \define \E_Q[\exp(-\beta \ell)]$ and the \emph{free
energy} $\F_\beta[\ell] \define -\frac{1}{\beta}\log \Z_\beta[\ell]$.
Then, for $P \ll Q$:
\begin{align}
     \F_\beta[\ell] - \frac{1}{\beta} \KL[P \| Q] &\leq \E_P[\ell],
     \tag{GVP}\label{eq:gvp}
\end{align}
with equality if and only if $P(x) / Q(x) \propto \exp(-\beta \ell)$. The
parameter $\beta > 0$ acts as an inverse temperature, interpolating
between an average-case bound ($\beta \to 0^+$) and a worst-case
bound ($\beta \to \infty$); optimizing over $\beta$ yields the
tightest result.

The following bound requires a \emph{uniform mutual information (MI) budget}
$\bar{I} \geq 0$ satisfying
$\sum_{t=0}^{T} \MI_{\seq{\pi}}[x_t; y_t] \leq \bar{I}$ for all
$\seq{\pi} \in \causals$,
where $\MI_{\seq{\pi}}[x_t; y_t] \define \E_{\seq{\pi}}\!\big[\log \frac{p(x_t, y_t)}{p(x_t)\,p(y_t)}\big]$ is the mutual information between state and observation at time $t$;
\Cref{sec:self-consistent} constructs such a budget from the cost structure.

\begin{theorem}[TRFE Bound]
    \label{thm:trfe}
    Under \Cref{ass:separability}, let $\bar{I} \geq 0$ be a uniform MI budget. Then,
    \begin{align}
        \Jirr(\bar{I}) \define \sup_{\beta > 0}\qty{\inf_{\seq{u} \in \cspace^T} F^x_\beta(\seq{u}) + \Jtrj^u(\seq{u}) - \frac{\bar{I}}{\beta}} \leq \opt{J},\label{eq:trfe}
    \end{align}
    where $F^x_\beta(\seq{u}) \define -\frac{1}{\beta}\log \E_{\seq{\xi}}\qty[\exp\!\big({-\beta\, \Jtrj^x(\seq{u};\, \seq{\xi})}\big)]$ is the free energy of the state cost.
\end{theorem}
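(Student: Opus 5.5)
Fix an arbitrary $\seq{\pi} \in \causals$ and $\beta > 0$. I will apply \eqref{eq:gvp} on the space of primitives and observations $(\seq{\xi}, y_{0:T})$, with $\ell$ the full trajectory cost $\JJtrj[\seq{\pi}]$. Here $P$ is the true closed-loop path measure. The reference $Q$ keeps $\seq{\xi} \sim P_w$ but replaces the sensor by outputs $y_t \sim \nu_t$ drawn independently of everything else. I choose $\nu_t$ to be the $P$-marginal of $y_t$. If I then show $\F_\beta[\ell] \ge \inf_{\seq{u}} F^x_\beta(\seq{u}) + \Jtrj^u(\seq{u})$ and $\KL[P\|Q] \le \bar I$, rearranging \eqref{eq:gvp} gives $\func{J}[\seq{\pi}] \ge \inf_{\seq{u}}\{F^x_\beta(\seq{u}) + \Jtrj^u(\seq{u})\} - \bar I/\beta$. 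Taking the infimum over $\seq{\pi}$ and the supremum over $\beta$ then yields \eqref{eq:trfe}.

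\textbf{Free energy step.} Under $Q$, the observation sequence $\seq{y}$ is independent of $\seq{\xi}$. Hence the control sequence $\seq{u}(\seq{y}) = (\pi_t(y_{0:t}))_t$ is independent of $\seq{\xi}$, and conditionally on $\seq{y}$ the system runs open loop. By \Cref{ass:separability}, the cost splits as $\ell = \Jtrj^x(\seq{u}(\seq{y});\seq{\xi}) + \Jtrj^u(\seq{u}(\seq{y}))$, and the second term is deterministic given $\seq{y}$. Conditioning on $\seq{y}$ and using Fubini,
\begin{align*}
\Z_\beta[\ell] = \E_{\seq{y}\sim\nu}\Big[e^{-\beta \Jtrj^u(\seq{u}(\seq{y}))}\,\E_{\seq{\xi}}\big[e^{-\beta \Jtrj^x(\seq{u}(\seq{y});\seq{\xi})}\big]\Big] = \E_{\nu}\Big[e^{-\beta( F^x_\beta(\seq{u}(\seq{y})) + \Jtrj^u(\seq{u}(\seq{y})))}\Big].
\end{align*}
This is at most $\exp(-\beta \inf_{\seq{u}}\{F^x_\beta(\seq{u})+\Jtrj^u(\seq{u})\})$. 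Taking $-\frac1\beta\log$ gives the desired lower bound on $\F_\beta[\ell]$.

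\textbf{Information step.} This is the step I expect to carry the real content. I would expand $\KL[P\|Q]$ by the chain rule along the ordering $\seq{\xi}, y_0, y_1, \dots, y_T$. The $\seq{\xi}$ marginals agree, so
\begin{align*}
\KL[P\|Q] = \sum_{t=0}^T \E_P\Big[\log \frac{p(y_t \mid \seq{\xi}, y_{0:t-1})}{\nu_t(y_t)}\Big].
\end{align*}
The key observation concerns the conditioning in each term. Given $\seq{\xi}$ and $y_{0:t-1}$, the controls $u_{0:t-1}$ are fixed by causality, so $x_t$ is a deterministic function of them through \eqref{eq:dyn}. Since $v_t$ is i.i.d.\ and independent of the primitives, \eqref{eq:sense} gives $p(y_t\mid\seq{\xi},y_{0:t-1}) = p(y_t \mid x_t)$. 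With $\nu_t = p(y_t)$, the $t$-th term is exactly $\MI_{\seq{\pi}}[x_t;y_t]$. Summing, $\KL[P\|Q] = \sum_t \MI_{\seq{\pi}}[x_t;y_t] \le \bar I$ by the uniform budget.

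\textbf{Technical points.} Absolute continuity $P \ll Q$ holds because the Gaussian likelihood $p(y_t\mid x_t)$ is strictly positive, so $\nu_t$ has full support wherever $p(y_t\mid x_t)$ does. I also need measurability of $\seq{y}\mapsto F^x_\beta(\seq{u}(\seq{y}))$, and the infimum bound holds pointwise in $\seq{y}$. Finiteness of the expectations is covered by the standing integrability footnote, and it is needed to apply \eqref{eq:gvp}.
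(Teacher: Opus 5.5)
Your proof is correct and follows the paper's argument. Both apply \eqref{eq:gvp} against a reference measure in which the observations are drawn independently from $\nu_t = p^{\seq{\pi}}(y_t)$, reduce the KL divergence to $\sum_t \MI_{\seq{\pi}}[x_t;y_t] \le \bar{I}$, and factor the partition function conditionally on $y_{0:T}$ to bound it by the open-loop infimum. The difference is that you define the path measures on the primitives $(\seq{\xi}, y_{0:T})$ rather than on $(x_{0:T}, y_{0:T})$, and you derive the KL identity explicitly via the chain rule and causality. That version is slightly cleaner, since it needs no Markov transition densities $p(x_{t+1}\mid x_t,u_t)$ (which do not exist for degenerate noise such as the Dubins car) and it handles a general joint law $P_w$.
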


\begin{proof}
    Fix $\seq{\pi} \in \causals$ and $\beta > 0$. Write $\tau = (x_{0:T},\, y_{0:T})$ for a trajectory. Define two distributions over $\tau$:
     \begin{align}
        P^{\seq{\pi}}(\tau) &= p_0(x_0) \prod_{t=0}^{T-1} p(x_{t+1} \mid x_t, u_t) \prod_{t=0}^{T} p(y_t \mid x_t),\label{eq:cl_dist}\\
        Q^{\seq{\pi}}(\tau) &= p_0(x_0) \prod_{t=0}^{T-1} p(x_{t+1} \mid x_t, u_t) \prod_{t=0}^{T} \nu_t(y_t),\nonumber
    \end{align}
    where $u_t = \pi_t(y_{0:t})$ in both cases and each $\nu_t(y_t)$ is an arbitrary density over $\ospace$. Applying \eqref{eq:gvp} with $\ell = \JJtrj[\seq{\pi}]$:
    \begin{align}
    \func{J}[\seq{\pi}] \geq -\tfrac{1}{\beta}\log \E_{Q^{\seq{\pi}}}[\exp\!\big(\!{-\beta\, \JJtrj[\seq{\pi}]}\big)] - \tfrac{1}{\beta}\KL[P^{\seq{\pi}} \| Q^{\seq{\pi}}].\label{eq:gvp_applied}
    \end{align}
    Since $P^{\seq{\pi}}$ and $Q^{\seq{\pi}}$ share the same dynamics, their KL divergence reduces to $\sum_{t=0}^{T} \E_{\seq{\pi}}\big[\KL[p(y_t \mid x_t) \| \nu_t(y_t)]\big]$, which is minimized at $\opt{\nu}_t = p^{\seq{\pi}}(y_t)$, yielding:
    \begin{align}
        \inf_{\nu_{0:T}} \KL\qty[P^{\seq{\pi}} \| Q^{\seq{\pi}}] = \textstyle\sum_{t=0}^{T} \MI_{\seq{\pi}}[x_t; y_t] \leq \bar{I}.\label{eq:kl_mi}
    \end{align}
    Under $Q^{\seq{\pi}}$, the observations are independent of the system state. For each fixed realization of $y_{0:T}$ the control sequence \mbox{$\seq{u}(y_{0:T}) = \pi(y_{0:T})$} is deterministic. The only remaining randomness in the state cost is due to the process noise $\seq{\xi}$. The expectation in~\eqref{eq:gvp_applied} factors as
    \begin{align}
        &\E_{Q^{\seq{\pi}}}\!\big[\exp(-\beta\, \JJtrj[\seq{\pi}])\big]\label{eq:factor}\\
        &\quad= \E_{y_{0:T}}\!\Big[\exp(-\beta\, \Jtrj^u(\seq{u}(y_{0:T})))\,
        \Z_\beta\!\big[\Jtrj^x(\seq{u}(y_{0:T});\,\cdot\,)\big]\Big].
        \nonumber
    \end{align}
    Since $F_\beta^x(\seq{u}(y_{0:T})) + \Jtrj^u(\seq{u}(y_{0:T})) \geq \inf_{\seq{u}}\{F_\beta^x(\seq{u}) + \Jtrj^u(\seq{u})\}$ for any process noise realization, the logarithm in~\eqref{eq:factor} is bounded above by $\exp\!\big({-\beta \inf_{\seq{u}}\{F_\beta^x(\seq{u}) + \Jtrj^u(\seq{u})\}}\big)$. Apply the decreasing function $-\inv{\beta}\log(\cdot)$ to reverse the inequality:
    \begin{align}
        -\tfrac{1}{\beta}\log \E_{Q^{\seq{\pi}}}\![\exp\!\big(\!{-\beta\, \JJtrj[\seq{\pi}]}\big)] &\geq \underset{\seq{u}}{\inf} F_\beta^x(\seq{u}) + \Jtrj^u(\seq{u}).\label{eq:fe_bound}
    \end{align}
    
    Substituting~\eqref{eq:kl_mi} and~\eqref{eq:fe_bound} into~\eqref{eq:gvp_applied} gives:
    \begin{align}
        \func{J}[\seq{\pi}] \geq \inf_{\seq{u}}\{F_\beta^x(\seq{u}) + \Jtrj^u(\seq{u})\} - \bar{I}/\beta.
    \end{align} Since this holds for all $\seq{\pi} \in \causals$ and $\beta > 0$, optimizing over these free variables yields~\eqref{eq:trfe}.
\end{proof}

\begin{remark}
The TRIP bound~\cite{Majumdar22,Majumdar23} is a special case: replacing $\Jtrj^x$ with $\mathbf{1}[\Jtrj^x > \theta]$ and taking $\beta \to \infty$ recovers the generalized Fano inequality.
\end{remark}

The function $\Jirr(\bar{I})$ in \eqref{eq:trfe} is the \emph{irreducible task cost}: the minimum cost remaining after accounting for the sensor's information budget.\footnote{More precisely, $\Jirr(\bar{I})$ is a lower bound on the irreducible cost, since tighter bounds on $\opt{J}$ may exist.} \Cref{sec:convexity} gives conditions under which the inner minimization over $\seq{u}$ is tractable. The following properties of $\Jirr(\bar{I})$ are used in \Cref{sec:self-consistent} to establish the self-consistent fixed-point bound.

\begin{proposition}
\label{prop:irr}
Define the \emph{optimal open-loop task cost},
\begin{align}
    \Jol \define \inf_{\seq{u} \in \cspace^T}\; \E_{\seq{\xi}}\!\big[\Jtrj(\seq{u};\, \seq{\xi})\big].\label{eq:j_ol}
\end{align}
When $\Jol > 0$, $\Jirr(\bar{I})$ has the following properties for \mbox{$\bar{I} \geq 0$}: (i)~$\Jirr(\bar{I})$ is strictly decreasing, (ii)~\mbox{$\Jirr(0) = \Jol$}, and (iii)~\mbox{$\Jirr(\bar{I}) \leq \Jol$}.
\end{proposition}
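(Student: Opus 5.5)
The plan is to study the scalar function
\[
G(\beta) \define \inf_{\seq{u} \in \cspace^T}\big\{F^x_\beta(\seq{u}) + \Jtrj^u(\seq{u})\big\}, \qquad \beta > 0,
\]
so that $\Jirr(\bar{I}) = \sup_{\beta>0} \varphi_{\bar{I}}(\beta)$ with $\varphi_{\bar{I}}(\beta) \define G(\beta) - \bar{I}/\beta$.

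\textbf{Step 1: basic properties of $G$.} For each fixed $\seq{u}$, Jensen's inequality applied to the convex map $z \mapsto e^{-\beta z}$ gives $F^x_\beta(\seq{u}) \le \E_{\seq{\xi}}[\Jtrj^x(\seq{u};\seq{\xi})]$. The standard cumulant-generating-function argument shows that $\beta \mapsto F^x_\beta(\seq{u})$ is nonincreasing. Since $\Jtrj^x \ge 0$, we also have $F^x_\beta \ge 0$. Taking the infimum over $\seq{u}$, it follows that $G$ is nonincreasing, $0 \le G(\beta) \le \Jol$, and therefore $\varphi_{\bar{I}}(\beta) \le G(\beta) \le \Jol$. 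This immediately gives (iii).

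\textbf{Step 2: property (ii).} Setting $\bar{I}=0$ gives $\Jirr(0) = \sup_\beta G(\beta) = \lim_{\beta\to 0^+} G(\beta) \le \Jol$, so only the reverse inequality remains. It amounts to exchanging the limit $\beta \to 0^+$ with the infimum over $\seq{u}$. I would use the elementary bound $e^{-z} \le 1 - z + z^2/2$ for $z \ge 0$, together with $\log Z \le Z - 1$. This yields the quadratic lower bound
\[
F^x_\beta(\seq{u}) \;\ge\; \E_{\seq{\xi}}[\Jtrj^x(\seq{u};\seq{\xi})] - \tfrac{\beta}{2}\,\E_{\seq{\xi}}\big[\Jtrj^x(\seq{u};\seq{\xi})^2\big].
\]
Hence $G(\beta) \ge \Jol - \tfrac{\beta}{2}\sup_{\seq{u}\in\mathcal{K}} \E[(\Jtrj^x)^2]$, where $\mathcal{K}$ is any set of controls guaranteed to contain the near-minimizers of $F^x_\beta + \Jtrj^u$.

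\emph{This is the step I expect to be the main obstacle.} The difficulty is bounding the second moment of the cost uniformly over the near-minimizing controls. I would first try to restrict to controls with $\Jtrj^u(\seq{u}) \le \Jol$, since $G \le \Jol$ and $F^x_\beta \ge 0$ force near-minimizers into this sublevel set. On that set I would then invoke the footnoted finiteness and integrability assumptions to get a uniform second-moment bound. Failing that, I would pass to a lower-semicontinuity or compactness argument on this sublevel set. Letting $\beta \to 0^+$ then gives $\Jirr(0) \ge \Jol$.

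\textbf{Step 3: property (i).} $\Jirr$ is a supremum of functions affine and nonincreasing in $\bar{I}$, so it is convex and nonincreasing. Strictness follows if, for each $\bar{I}_2 > 0$, the supremum of $\varphi_{\bar{I}_2}$ is attained (or approached) at a finite $\beta^\star$. Indeed, for $\bar{I}_1 < \bar{I}_2$ we then get
\[
\Jirr(\bar{I}_1) \;\ge\; \varphi_{\bar{I}_1}(\beta^\star) \;=\; \varphi_{\bar{I}_2}(\beta^\star) + (\bar{I}_2 - \bar{I}_1)/\beta^\star \;>\; \Jirr(\bar{I}_2).
\]
The cases $\bar{I}_1 = 0$ and $\bar{I}_1 > 0$ are handled identically.

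To get a finite maximizer I would argue as follows. First, $\varphi_{\bar{I}}(\beta) \to -\infty$ as $\beta \to 0^+$. Second, $\varphi_{\bar{I}}(\beta) \to G_\infty \define \lim_{\beta\to\infty} G(\beta)$ as $\beta \to \infty$. Third, by Step 2 and the hypothesis $\Jol > 0$, $G$ is not identically equal to $G_\infty$ near zero. It then suffices to show that $\varphi_{\bar{I}}(\beta) > G_\infty$ for some finite $\beta$, which gives a maximizing sequence in a compact interval.

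If this fails, the fallback is a convexity argument. Were $\Jirr$ constant on some interval $[\bar{I}_1, \bar{I}_2]$, convexity and monotonicity would force it to be constant on $[\bar{I}_1, \infty)$ and equal to $G_\infty$. That should contradict the finite-$\beta$ gain $(\bar{I}_2 - \bar{I}_1)/\beta$ at any $\beta$ with $G(\beta) > G_\infty$. I expect this bookkeeping to be the second delicate point of the proof.
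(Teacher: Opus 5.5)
Your Step~1 (which gives (iii)) and your reduction of (i) to a maximizing sequence bounded away from $\beta=\infty$ are correct. You also correctly see what the paper's short proof glosses over. First, a supremum over $\beta$ of strictly decreasing functions is only weakly decreasing. Second, the pointwise limit $F^x_\beta(\seq u)\to\E_{\seq\xi}[\Jtrj^x(\seq u;\seq\xi)]$ cannot simply be moved inside $\inf_{\seq u}$. However, neither gap can be closed from the stated hypotheses, because both (i) and (ii) can fail.

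\textbf{Property (i).} What you actually need is $\sup_{\beta>0}\beta\,(G(\beta)-G_\infty)>\bar I$ for every $\bar I$, and $\Jol>0$ does not give this. If $P_w$ is degenerate and $x_0\neq0$, then $F^x_\beta=\Jtrj^x$ for all $\beta$. So $G\equiv\Jol=G_\infty$, which refutes your ``Third'' claim, and $\Jirr(\bar I)=\sup_\beta\{\Jol-\bar I/\beta\}=\Jol$ for every $\bar I$. The same failure occurs with Gaussian noise whenever the cost has an atom at its minimum. Take $x_{t+1}=x_t+u_t+w_t$, $x_0=0$, $q_t(x)=\max(0,x^2-1)$ (coercive with $a=b=1$) and $r_t(0)=0$. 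Under $\seq u=0$ the state stays in $[-1,1]$ with some probability $p_0>0$, so $0\le\beta\,G(\beta)\le\log(1/p_0)$ and $G_\infty=0$. Hence $\Jirr\equiv0$ on $[\log(1/p_0),\infty)$, even though $\Jol>0$. Your fallback therefore yields no contradiction. Having $\Jirr\equiv G_\infty$ on $[\bar I_1,\infty)$ is equivalent to $\beta\,(G(\beta)-G_\infty)\le\bar I_1$ for all $\beta$, which is perfectly compatible with $G(\beta)>G_\infty$.

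\textbf{Property (ii).} Your quadratic lower bound is fine. What is missing is a uniform second-moment bound on $\Jtrj^x(\seq u;\cdot)$ over $\{\Jtrj^u\le\Jol\}$; uniform integrability would also do, via truncation at a level $M$. The footnote does not supply either: it only gives finiteness for each fixed $\seq u$. With $\cspace$ unbounded and $f$ merely measurable, there is also no compactness or lower semicontinuity to fall back on. A counterexample: take $T=1$, $x_0=0$, $q_t(x)=x^2$, $r_t\equiv0$ (so your sublevel set is all of $\cspace$), and $\cspace=[1,\infty)$. Let $x_1=\sqrt{u_0}\,\mathbf{1}\{w_0>c(u_0)\}$, with $c(u_0)$ chosen so that this event has probability $1/u_0$. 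Then $\E_{\seq\xi}[\Jtrj^x]=1$ for every $u_0$, so $\Jol=1$. Yet $F^x_\beta(u_0)\le-\frac{1}{\beta}\log(1-1/u_0)\to0$ as $u_0\to\infty$. Hence $G\equiv0$ and $\Jirr(0)=0<\Jol$, and $\Jirr$ is constant, so (i) fails here too.

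\textbf{What is needed.} The obstacles you flagged are genuine failures of the proposition, not bookkeeping. It needs extra hypotheses:
\begin{itemize}
\item for (ii), uniform integrability of $\Jtrj^x$ on sublevel sets of $\Jtrj^u$, or compact $\cspace$ together with lower semicontinuity;
\item for (i), $\sup_\beta\beta\,(G(\beta)-G_\infty)=\infty$. This holds, for example, in the Gaussian linear--quadratic case, where this quantity grows logarithmically in $\beta$.
\end{itemize}
Under such hypotheses your outline becomes a complete proof, and a more careful one than the paper's.
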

\begin{proof}
For fixed $\beta > 0$ and $\seq{u}$, the map $\bar{I} \mapsto F_\beta^x(\seq{u}) + \Jtrj^u(\seq{u}) - \bar{I}/\beta$ is strictly decreasing in $\bar{I}$---a property preserved by the infimum over $\seq{u}$ (of which $\bar{I}$ is independent) and the supremum over $\beta$. The identity $\Jirr(0) = \Jol$ holds as a consequence of $F_\beta^x(\seq{u}) \to \E_{\seq{\xi}}[\Jtrj^x(\seq{u}; \seq{\xi})]$ as $\beta \to 0^+$ (L'H\^{o}pital's rule).
\end{proof}

It remains to identify a valid MI budget $\bar{I}$. The next section shows that the cost structure itself provides one: any controller that achieves low cost must keep the state concentrated, which limits the MI the sensor can carry.

\section{Tightening by Coupling Task Cost and Sensor Information}
\label{sec:self-consistent}
The TRFE bound (\Cref{thm:trfe}) requires a uniform MI budget
$\bar{I} \geq 0$. Identifying one is nontrivial: different
controllers induce different trajectory distributions and therefore
different MI values for the same sensor. However, the MI a sensor
can extract is constrained by the cost the controller achieves---a
performant controller concentrates the state distribution which bounds the MI. This connection between the MI and task cost is formalized in this section.

\subsection{Cost-Information Certificates}
\label{sec:sc_info}

Under \Cref{ass:separability}, $r_t \geq 0$ implies that any policy with $\func{J}[\seq{\pi}] \leq J$ satisfies $\sum_{t=0}^{T} \E_{\seq{\pi}}[q_t(x_t)] \leq J$. Combined with \Cref{ass:coercivity}, this constrains the uncentered second moments $M_t \define \E_{\seq{\pi}}[x_t \transp{x_t}]$:
\begin{align}
    a \textstyle\sum_{t=0}^{T} \tr(M_t) \leq J + b\,(T + 1). \label{eq:moment_bound}
\end{align}
When $q_t(x) = \transp{x}Qx$ with $Q \succ 0$, this sharpens to $\sum_{t=0}^{T} \tr(QM_t) \leq J$. Since the Gaussian maximizes differential entropy for a fixed covariance~\cite[Theorem~8.6.5]{Cover05}, the MI between state and observation is bounded by
\begin{align}
    \MI_{\seq{\pi}}[x_t;\, y_t]
    \leq \frac{1}{2}\log\det\!\big(I_p + \inv{\Sigma}_v\,
    \Sigma_{h_t}\big), \label{eq:mi_bound}
\end{align}
where $\Sigma_{h_t} = \Cov(h(x_t))$. These two facts motivate an abstract definition of an information budget compatible with a given cost.

\begin{definition}[Cost-Information Certificate]
    \label{def:ci_cert}
    A function $\bar{I} : [0,\infty) \to [0,\infty)$ is a
    \emph{cost-information (CI) certificate} for
    system~\cref{eq:dyn} and \cref{eq:sense} if:
    \begin{enumerate}
        \item[(a)] $\bar{I}$ is continuous and strictly increasing; and
        \item[(b)] for $\seq{\pi} \in \causals$ with
            $\func{J}[\seq{\pi}] \leq J$,\;
            $\sum_{t=0}^{T} \MI_{\seq{\pi}}(x_t;\, y_t)
            \leq \bar{I}(J)$.
    \end{enumerate}
\end{definition}

The following propositions construct certificates under two
sensor models.

\begin{proposition}[Nonlinear Sensors]
    \label{prop:nonlinear_mi}
    Let $h(x)$ be $L$-Lipschitz with $L > 0$, and let $\lambda_{\min}$
    denote the smallest eigenvalue of $\Sigma_v$. Under
    \Cref{ass:separability,ass:coercivity}, the function
    \begin{align}
        \bar{I}(J) \define \frac{(T + 1)p}{2}\log\!\left(
        1 + \frac{L^2(J + bT)}{ap\,(T + 1)\,\lambda_{\min}}\right)
        \label{eq:ibar_nonlinear}
    \end{align}
    is a cost-information certificate for
    system~\eqref{eq:dyn}--\eqref{eq:sense}.
\end{proposition}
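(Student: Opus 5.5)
Both conditions of \Cref{def:ci_cert} have to be verified. Condition~(a) is immediate: for $a,p,T+1,\lambda_{\min},L>0$ the argument of the logarithm in \eqref{eq:ibar_nonlinear} is affine in $J$ with positive slope, and $\log$ is continuous and strictly increasing. It also follows that $\bar{I}(J)\ge 0$ for $J\ge 0$.

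The substance is condition~(b). I would fix $\seq{\pi}\in\causals$ with $\func{J}[\seq{\pi}]\le J$ and bound each $\MI_{\seq{\pi}}[x_t;y_t]$ by \eqref{eq:mi_bound}. Since $\Sigma_v \succeq \lambda_{\min} I_p$, we have $\inv{\Sigma}_v \preceq \lambda_{\min}^{-1} I_p$. Hence
\[
\log\det(I_p+\inv{\Sigma}_v\Sigma_{h_t}) = \log\det\bigl(I_p+\Sigma_v^{-1/2}\Sigma_{h_t}\Sigma_v^{-1/2}\bigr)\le \log\det\bigl(I_p+\lambda_{\min}^{-1}\Sigma_{h_t}\bigr).
\]
By the AM--GM inequality on the eigenvalues, this is at most $p\log\bigl(1+\tr(\Sigma_{h_t})/(p\lambda_{\min})\bigr)$. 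To link $\tr(\Sigma_{h_t})$ to the state moments I would use that the covariance is the minimal mean-squared deviation from a constant. Then Lipschitz continuity gives
\[
\tr\Sigma_{h_t}\le \E\norm{h(x_t)-h(0)}^2\le L^2\,\E\norm{x_t}^2=L^2\tr(M_t).
\]
Thus $\MI_{\seq{\pi}}[x_t;y_t]\le \frac{p}{2}\log\bigl(1+L^2\tr(M_t)/(p\lambda_{\min})\bigr)$.

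Next I would sum over $t$ and use concavity of $s\mapsto\log(1+cs)$. By Jensen's inequality over the $T+1$ time indices,
\[
\sum_t \frac{p}{2}\log(1+c\,s_t)\le \frac{(T+1)p}{2}\log\Bigl(1+\frac{c}{T+1}\sum_t s_t\Bigr),
\]
with $s_t=\tr(M_t)$ and $c=L^2/(p\lambda_{\min})$. Finally I would insert the moment bound \eqref{eq:moment_bound}, $\sum_t\tr(M_t)\le (J+b(T+1))/a$, using monotonicity of the logarithm.

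The main obstacle is this last constant. The derivation naturally produces $b(T+1)$, while \eqref{eq:ibar_nonlinear} has $bT$. I would first recheck \eqref{eq:moment_bound} to see whether the sum or the coercivity offset should be indexed so that only $T$ offsets appear, for example if one time step's cost is handled differently. If it cannot be reconciled, the argument gives the certificate with $b(T+1)$, which coincides with the statement when $b=0$. A lesser point to check is whether $\Sigma_{h_t}$ should be the covariance under the closed-loop measure; it is, and \eqref{eq:mi_bound} holds for any marginal of $x_t$ because $v_t$ is independent Gaussian noise.
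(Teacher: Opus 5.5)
Your proof is correct and follows the paper's own argument: AM--GM applied to \eqref{eq:mi_bound}, the Lipschitz bound $\tr(\Sigma_v^{-1}\Sigma_{h_t})\le L^2\tr(M_t)/\lambda_{\min}$ (which your reduction $\Sigma_v^{-1}\preceq\lambda_{\min}^{-1}I_p$ simply makes explicit), Jensen over the $T+1$ time indices, and substitution of \eqref{eq:moment_bound}. Your suspicion about the constant is justified: that substitution yields $b(T+1)$ in the paper's proof as well, and the $bT$ form cannot be rescued for $b>0$. For example, take $n=p=1$, $h(x)=Lx$, $r_t\equiv 0$, $q_t(x)=\max(0,ax^2-b)$, $x_{t+1}=w_t$, and $x_0,w_t$ i.i.d.\ uniform on $\{\pm\sqrt{b/a}\}$. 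Every policy then has zero cost, while $\sum_{t}\MI_{\seq{\pi}}[x_t;y_t]=(T+1)\bigl(\tfrac{s}{2}+o(s)\bigr)$ with $s\define L^2b/(a\lambda_{\min})$ exceeds $\bar{I}(0)=\tfrac{Ts}{2}+O(s^2)$ for small $s$, so the certificate should carry $b(T+1)$, exactly as you derived.
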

\begin{proof}
    Applying the arithmetic mean--geometric mean (AM-GM) inequality to~\cref{eq:mi_bound},\footnote{Specifically,
    for $A \in \set{S}^n_+$, $\det(I_p + A) \leq (1 + \tr(A)/p)^p$.}
    and using $\tr(\inv{\Sigma}_v \Cov(h(x_t))) \leq L^2\tr(M_t)/\lambda_{\min}$:
    \begin{align}
        \MI_{\seq{\pi}}(x_t;\, y_t)
        \leq \frac{p}{2}\log\!\left(1 + \frac{L^2\,\tr(M_t)}{p\,\lambda_{\min}}\right).
        \label{eq:mi_nonlinear_step}
    \end{align}
    Summing over $t = 0, \dots, T$, applying Jensen's inequality to $\log(\cdot)$, and substituting~\cref{eq:moment_bound}
    yields property~(b). Property~(a) follows from the strict
    monotonicity of $\log(\cdot)$ and $L > 0$.
\end{proof}

When the sensor is linear and the state cost quadratic, both
relaxations tighten:

\begin{proposition}[Linear Sensors]
    \label{prop:linear_mi}
    Let $h(x) = Hx$ with $\inv{\Sigma}_v H \neq 0$, and let
    $q_t(x) = \transp{x}Qx$ with $Q \in \set{S}^n_{+}$, $Q \neq 0$. Under
    \Cref{ass:separability}, the function
    \begin{maxi}|s|{M_t \succeq 0}
        {\sum_{t=0}^{T} \frac{1}{2}\ln\det\!\big(I_p +
        \Sigma_v^{-1} H M_t \transp{H}\big)}
        {\label{eq:mi_opt}}{\bar{I}(J) \define}
        \addConstraint{\tr(QM_t)}{\leq J}
    \end{maxi}
    is a cost-information certificate for
    system~\eqref{eq:dyn}--\eqref{eq:sense}.
\end{proposition}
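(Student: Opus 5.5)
We verify property (b) first, then property (a). The optimization in \cref{eq:mi_opt} is read as a joint maximization over $(M_0,\dots,M_T)$ under the coupled budget $\sum_{t=0}^T \tr(QM_t) \le J$. This is the reading consistent with \cref{eq:moment_bound}; a per-$t$ constraint would be even looser and the argument below goes through unchanged.

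\textbf{Property (b).} Fix $\seq{\pi} \in \causals$ with $\func{J}[\seq{\pi}] \le J$ and set $M_t = \E_{\seq{\pi}}[x_t\transp{x_t}]$.
\begin{enumerate}
\item Since $r_t \ge 0$ under \Cref{ass:separability}, we get $\sum_t \E_{\seq{\pi}}[\transp{x_t}Qx_t] = \sum_t \tr(QM_t) \le J$. So $(M_t)_t$ is feasible for \cref{eq:mi_opt}.
\item For the linear sensor, $\Sigma_{h_t} = H\Cov(x_t)\transp{H}$, and $\Cov(x_t) \preceq M_t$. Because $A \mapsto \log\det(I_p + \Sigma_v^{-1}A)$ is monotone on $\set{S}^p_+$, \cref{eq:mi_bound} gives
\begin{align*}
\MI_{\seq{\pi}}[x_t;y_t] \le \tfrac12\log\det\big(I_p + \Sigma_v^{-1}HM_t\transp{H}\big).
\end{align*}
Monotonicity holds because $\det(I+\Sigma_v^{-1}A) = \det(I+\Sigma_v^{-1/2}A\Sigma_v^{-1/2})$, which is increasing in the Loewner order.
\item Summing over $t$ and using feasibility from step 1 yields $\sum_t \MI_{\seq{\pi}}[x_t;y_t] \le \bar I(J)$.
\end{enumerate}

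\textbf{Finiteness.} We also need $\bar I(J) < \infty$. A positive semidefinite $Q$ leaves directions in $\ker Q$ unconstrained. If $H$ has a component on $\ker Q$, the supremum is $+\infty$ and $\bar I$ is not a genuine certificate. We therefore need, and will state explicitly, that $\ker Q \subseteq \ker H$ (e.g.\ $Q \succ 0$). Under this condition, $\tr(QM_t) \le J$ bounds $HM_t\transp{H}$, so the feasible objective values lie in a compact set. The supremum is then finite and attained, since the objective is continuous and the relevant projection of the feasible set is compact. This is the step we expect to be the main obstacle: the hypotheses as written do not guarantee finiteness, so the required condition has to be imposed rather than derived.

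\textbf{Property (a).} The argument rests on the fact that the objective is concave in $(M_t)$ and the constraint is linear, so \cref{eq:mi_opt} is a convex program.
\begin{itemize}
\item \emph{Monotonicity.} The feasible set grows with $J$, so $\bar I$ is nondecreasing.
\item \emph{Strict increase.} Let $M^\star$ be optimal at budget $J$ and take $J' > J$. Since $\Sigma_v^{-1}H \ne 0$, there is a unit vector $z$ with $Hz \ne 0$; we choose $z$ so that $\transp{z}Qz > 0$, or $\transp{z}Qz = 0$, in which case the budget is not needed at all. Adding $\epsilon z\transp{z}$ to $M^\star_0$ with $\epsilon = (J'-J)/\transp{z}Qz$ keeps the point feasible at budget $J'$. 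It strictly increases the objective, because $\log\det(I + B + \epsilon\,\Sigma_v^{-1}Hz\transp{z}\transp{H})$ grows by the matrix determinant lemma term $\log\bigl(1 + \epsilon\,\transp{z}\transp{H}(I+B)^{-\top}\Sigma_v^{-1}Hz\bigr) > 0$. Positivity uses that $\Sigma_v^{-1/2}(\cdot)\Sigma_v^{-1/2}$ symmetrization gives a positive definite quadratic form.
\item \emph{Continuity.} Concavity of the perturbation function of a convex program in its right-hand side gives that $\bar I$ is concave on $(0,\infty)$, hence continuous there.
\item \emph{Continuity at $0$.} At $J = 0$ we have $\bar I(0) = 0$, since $\tr(QM) = 0$ forces $HM\transp{H} = 0$ under the kernel condition. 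Continuity at $0$ follows from the bound $\tr(HM\transp{H}) \le C\,\tr(QM)$, which sends the objective to $0$.
\end{itemize}
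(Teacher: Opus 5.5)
Your proposal is correct. For property~(b) it is exactly the paper's argument: feasibility of $M_t = \E_{\seq{\pi}}[x_t\transp{x_t}]$ in~\eqref{eq:mi_opt}, combined with the Gaussian maximum-entropy bound~\eqref{eq:mi_bound}. You also make explicit two steps the paper leaves implicit: $\Cov(Hx_t) = H\Cov(x_t)\transp{H} \preceq HM_t\transp{H}$, and the Loewner monotonicity of $A \mapsto \log\det(I_p + \Sigma_v^{-1}A)$.

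For property~(a) the paper only appeals to ``strict monotonicity of $\ln\det$ and $\Sigma_v^{-1}H \neq 0$'' and never discusses finiteness or continuity. Your arguments fill exactly those holes:
\begin{itemize}
\item the rank-one perturbation gives strict increase;
\item concavity of the optimal value in the budget gives continuity on $(0,\infty)$, and the linear bound $\tr(HM\transp{H}) \le C\tr(QM)$ gives continuity at $J=0$.
\end{itemize}

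Your finiteness caveat is a genuine defect of the statement, not over-caution, and $\ker Q \subseteq \ker H$ is precisely the necessary and sufficient condition. Take $n=2$, $p=1$, $Q = \operatorname{diag}(1,0)$, $H = [0\;\;1]$, $\Sigma_v = 1$. All stated hypotheses hold, yet $M_t = s\,e_2\transp{e_2}$ is feasible for every $s \geq 0$ with objective $\tfrac{1}{2}\log(1+s) \to \infty$, so $\bar I \equiv +\infty$. The paper's proof tacitly assumes $Q \succ 0$, since it water-fills over $Q^{-1/2}\transp{H}\Sigma_v^{-1}HQ^{-1/2}$. The Dubins example restricts to the range of $Q$, and that is legitimate only because your condition holds there ($Q = \operatorname{diag}(1,1,0)$, $H = [I_2\;\;0]$).

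Two small corrections:
\begin{itemize}
\item The matrix determinant lemma gives the increment $\log\bigl(1 + \epsilon\,\transp{(Hz)}(I+B)^{-1}\Sigma_v^{-1}Hz\bigr)$. Since $(I+B)^{-1}\Sigma_v^{-1} = (\Sigma_v + HM_0^\star\transp{H})^{-1} \succ 0$, positivity is immediate. With your $(I+B)^{-\top}$ the product is not symmetric, and positivity of its quadratic form is not automatic.
\item Under your kernel condition, every $z$ with $Hz \neq 0$ has $\transp{z}Qz > 0$. The branch $\transp{z}Qz = 0$ is therefore vacuous and should be dropped.
\end{itemize}
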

\noindent Program~\eqref{eq:mi_opt} is convex and admits a closed-form
solution via water-filling across the modes of
$Q^{-1/2}\transp{H}\inv{\Sigma}_v H Q^{-1/2}$, with the optimal
second-moment allocation uniform over time~\cite{Cover05}. Property~(b) of \Cref{def:ci_cert} follows from
the feasibility of $M_t^{\seq{\pi}}$ in~\eqref{eq:mi_opt} for any
$\seq{\pi}$ with $\func{J}[\seq{\pi}] \leq J$ and
\Cref{eq:mi_bound}; property~(a) from the strict monotonicity of
$\ln\det(\cdot)$ and the assumption $\inv{\Sigma}_v H \neq 0$.

\subsection{Tightening via a Fixed-Point Constraint}
\label{sec:sc_bound}

Define $\Phi(J) \define (\Jirr \circ \bar{I})(J)$, where $\bar{I}(J)$
is a certificate. Since the optimal policy achieves cost $\opt{J}$,
applying \Cref{thm:trfe} with budget $\bar{I}(\opt{J})$ gives
$\opt{J} \geq \Phi(\opt{J})$. This suggests searching for the largest
$J$ consistent with this constraint, i.e., the fixed point
$\Jsc = \Phi(\Jsc)$.

\begin{theorem}[Self-Consistent TRFE Bound]
    \label{thm:sc}
    Let $\bar{I}$ be a certificate for
    system~\eqref{eq:dyn}--\eqref{eq:sense} with $\bar{I}(\Jol) > 0$,
    and suppose $\Jol > 0$. Then:
    \begin{enumerate}
        \item[(i)] The function $\Phi(J)$ is continuous and strictly
        decreasing on $[0, \Jol]$.
        
        \item[(ii)] There is a unique solution
        $\Jsc \in (0,\, \Jol)$ to the fixed-point equation:
        \mbox{$J = \Phi(J)$}.
        
        \item[(iii)] This solution lower-bounds the optimal cost:
        \mbox{$\Jsc \leq \opt{J}$}.
    \end{enumerate}
\end{theorem}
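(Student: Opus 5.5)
The plan is to prove the three items in order, relying on \Cref{prop:irr} and the certificate properties in \Cref{def:ci_cert}. For (i), $\Phi = \Jirr \circ \bar{I}$ is a composition of a strictly increasing function ($\bar{I}$) with a strictly decreasing one ($\Jirr$, by \Cref{prop:irr}(i)), so $\Phi$ is strictly decreasing. Continuity of $\bar{I}$ is given, so the remaining work is continuity of $\Jirr$ on $[0,\infty)$.

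The continuity of $\Jirr$ is the step I expect to be the main obstacle. Write $G(\beta) \define \inf_{\seq{u}}\{F^x_\beta(\seq{u}) + \Jtrj^u(\seq{u})\}$, so that $\Jirr(\bar{I}) = \sup_{\beta>0}\{G(\beta) - \bar{I}/\beta\}$. As a function of $\bar{I}$ this is a supremum of affine functions, hence convex. It is finite on $[0,\infty)$: it is bounded above by $\Jol$ (\Cref{prop:irr}(iii)), and for $\bar I \ge 0$ it is bounded below by $\lim_{\beta\to\infty}$ terms, or at least it is not $-\infty$, since each term is finite. A finite convex function on $[0,\infty)$ is continuous on the open interval $(0,\infty)$. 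It remains to check continuity at $0$. Convexity gives upper semicontinuity along the segment at the endpoint: $\Jirr(\bar I) \le (1-\theta)\Jirr(0) + \theta \Jirr(\bar I_1)$ for $\bar I = \theta \bar I_1$. Lower semicontinuity holds because a supremum of continuous functions is lower semicontinuous. Together these give continuity at $0$. Composing with the continuous map $\bar{I}$ then yields continuity of $\Phi$ on $[0,\Jol]$.

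For (ii), consider $g(J) \define \Phi(J) - J$, which is continuous and strictly decreasing on $[0,\Jol]$. At $J=0$ we have $g(0) = \Jirr(\bar I(0)) \ge$ ... but what is actually needed is $g(0)>0$. The reason is that $\bar I(0)\ge 0$ is finite, so $\Jirr(\bar I(0)) > \Jirr(\bar I(0)) - $ nothing; I would argue instead that $\Phi(0) > 0$. Concretely, $\Phi(0) \ge \lim_{\beta\to0^+}(G(\beta) - \bar I(0)/\beta)$ does not obviously help, so I would show $\Phi(0)>0$ by choosing $\beta$ large enough that $G(\beta) > \bar I(0)/\beta$. If that fails for degenerate data, I would note that $\Phi(0)\ge 0$ whenever any feasible $\beta$ gives a nonnegative term, and in the boundary case I would treat $J=0$ separately. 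At $J=\Jol$, since $\bar I(\Jol)>0$ and $\Jirr$ is strictly decreasing with $\Jirr(0)=\Jol$, we get $\Phi(\Jol) < \Jol$, so $g(\Jol)<0$. The intermediate value theorem then gives a root, and strict monotonicity of $g$ makes it unique. The root lies in $(0,\Jol)$ provided $g(0)>0$. This positivity at $0$ is the second delicate point, and I would settle it through the nonnegativity of costs together with the behavior of $G(\beta)$ as $\beta\to 0^+$.

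For (iii), there are two cases. If $\opt{J} \ge \Jol$, then $\Jsc < \Jol \le \opt{J}$ and we are done. Otherwise $\opt{J}\in[0,\Jol)$, and since the optimal (or any $\epsilon$-optimal) policy has cost at most $\opt{J}$, the certificate gives a valid uniform MI budget $\bar I(\opt{J})$ over all policies with cost at most $\opt{J}$. Applying \Cref{thm:trfe} with the infimum restricted to those policies yields $\opt{J} \ge \Phi(\opt{J})$, that is, $g(\opt{J}) \le 0$. An $\epsilon$-argument handles the case where the optimum is not attained, using continuity of $\Phi$. Since $g$ is strictly decreasing and $g(\Jsc)=0$, it follows that $\opt{J}\ge\Jsc$. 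One point to check here is that the proof of \Cref{thm:trfe} only uses the MI bound for the fixed policy $\seq{\pi}$, so restricting to low-cost policies is legitimate.
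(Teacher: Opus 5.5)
Your parts (i) and (iii) follow the paper's proof. In (i), $\Phi$ is strictly decreasing as the composition of the strictly increasing certificate with the strictly decreasing $\Jirr$. In (iii), $g(\opt{J})\le 0$ plus monotonicity of $g$ gives the bound. In places you are more careful than the paper. It cites \Cref{prop:irr} for continuity of $\Jirr$, which that proposition never proves; your argument fills this in. That argument is: $\Jirr$ is a supremum of affine functions of $\bar I$ bounded above by $\Jol$, hence convex and continuous on $(0,\infty)$, and lower semicontinuity plus convexity give continuity at $0$. You also correctly note that \Cref{thm:trfe} only uses the MI bound for the fixed policy, so a certificate valid only for low-cost policies suffices.

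The gap is in (ii). You never establish $g(0)>0$, and without it the intermediate value theorem yields nothing in $(0,\Jol)$. None of your sketched routes closes it:
\begin{itemize}
\item Sending $\beta\to0^+$ is the wrong end when $\bar I(0)>0$, since $G(\beta)\le\Jol$ while $\bar I(0)/\beta\to+\infty$.
\item Taking $\beta$ large needs $\beta G(\beta)>\bar I(0)$ for some $\beta$, which you give no way to verify.
\item ``Treating $J=0$ separately'' would at best produce a fixed point at $0$, contradicting $\Jsc\in(0,\Jol)$.
\end{itemize}

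The paper gets $g(0)=\Jol$ from \Cref{prop:irr}(ii), which tacitly assumes $\bar I(0)=0$. That holds for \eqref{eq:mi_opt} with $Q\succ0$ and for \eqref{eq:ibar_nonlinear} with $b=0$. So you were right that $\Phi(0)=\Jirr(\bar I(0))$ need not equal $\Jol$ for a general certificate.

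The missing observation is that $\Jirr\ge 0$:
\begin{itemize}
\item Since $\Jtrj^x\ge 0$, we have $\E_{\seq{\xi}}[\exp(-\beta\Jtrj^x)]\le 1$, so $F^x_\beta\ge 0$.
\item With $\Jtrj^u\ge 0$, this gives $G(\beta)\ge 0$.
\item Hence $\Jirr(\bar I)\ge G(\beta)-\bar I/\beta\ge -\bar I/\beta$ for every $\beta>0$.
\item Letting $\beta\to\infty$ yields $\Jirr(\bar I)\ge 0$ for all $\bar I\ge 0$.
\end{itemize}
Combined with your part (i) and $\Jol>0$, this gives $g(0)=\Phi(0)>\Phi(\Jol)\ge 0$. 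This needs no appeal to $\bar I(0)=0$ and leaves no degenerate case.
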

\begin{proof}
We prove each property using $g(J) \define \Phi(J) - J$.

\begin{enumerate}
\setlength{\itemsep}{3pt}
    \item[(i)] $\bar{I}$ is continuous and strictly increasing by
    \Cref{def:ci_cert}(a), and $\Jirr(\bar{I})$ is continuous and
    strictly decreasing (\Cref{prop:irr}), so their composition
    $\Phi(J)$ is as well. Thus $g$ is continuous with
    $g'(J) < -1$ where the derivative exists.

    \item[(ii)] By \Cref{prop:irr}(ii),
    $\Jirr(0) = \Jol$, so $g(0) = \Jol > 0$. Since
    $\bar{I}(\Jol) > 0$, we have
    $\Phi(\Jol) < \Jirr(0) = \Jol$ and thus $g(\Jol) < 0$.
    The intermediate value theorem guarantees a root
    $\Jsc \in (0, \Jol)$, unique by~(i).

    \item[(iii)] The optimal policy achieves cost $\opt{J}$, so
    \Cref{def:ci_cert}(b) gives $\bar{I}(\opt{J})$ as a valid MI
    budget for \Cref{thm:trfe}: $\opt{J} \geq \Phi(\opt{J})$, i.e.,
    $g(\opt{J}) \leq 0.$ Since $g$ is strictly decreasing and
    $g(\Jsc) = 0$, it follows that $\opt{J} \geq \Jsc$. \qedhere
\end{enumerate}
\end{proof}

\Cref{thm:sc} holds for any CI certificate satisfying \Cref{def:ci_cert}, including the nonlinear bound~\eqref{eq:ibar_nonlinear}. The linear certificate~\eqref{eq:mi_opt} yields a tighter result but is not required for convergence. The self-consistent bound is strictly tighter than substituting the
open-loop MI budget directly into \Cref{thm:trfe}: since
$\Jsc < \Jol$ and $\bar{I}$ is strictly increasing,
$\bar{I}(\Jsc) < \bar{I}(\Jol)$, and therefore
$\Jsc = \Phi(\Jsc) > \Phi(\Jol)$.

\section{Convexity of the Free Energy}
\label{sec:convexity}
Computing \cref{eq:trfe} requires solving a \emph{global} minimization over
open-loop control sequences. This section gives conditions under which
$\Fbeta(\seq{u}) \define F_\beta^x(\seq{u}) + \Jtrj^u(\seq{u})$ is strictly convex in $\seq{u}$, so that any
critical point is a global minimum and the computed bound is
certifiably valid.\footnote{If these conditions are not met,
\Cref{thm:trfe,thm:sc} remain valid. Convexity is needed only to
certify the numerical computation of
$\inf_{\seq{u}} \Fbeta(\seq{u})$.}

\begin{assumption}[Matched Actuator Noise and Control Cost]
\label{ass:matched}
The process noise enters through the control input channel:
\begin{align}
    x_{t+1} = f(x_t,\, u_t + w_t),\ 
    w_t \sim \dist{N}(0, \Sigma_w),\ \Sigma_w \succ 0, \label{eq:matched}
\end{align}
with $f$ twice differentiable, so that the state trajectory depends
on $\seq{u}$ and $\seq{w}$ only through the realized input
$\seq{\zeta} \define \seq{u} + \seq{w}$. The control cost is
quadratic: $r(u) \define \tfrac{1}{2}\transp{u}\, R\, u$, $R \succ 0$.
\end{assumption}

Under \Cref{ass:matched}, the trajectory state cost depends on
$\seq{u}$ and $\seq{w}$ only via the realized input,
\mbox{$S(\seq{\zeta}) \define \sum_{t=0}^{T} q(x_t(\seq{\zeta}))$},
where $x_t(\seq{\zeta})$ is the state at time $t$ under input
sequence $\seq{\zeta}$ from a fixed initial condition $x_0$.

\begin{assumption}[Semiconvexity]
\label{ass:semiconvexity}
The trajectory cost $S$ is $\alpha$-\emph{semiconvex}
\cite{Cannarsa04} in $\seq{\zeta}$: there exists a value
$\alpha \geq 0$ such that
$\partial_{\seq{\zeta}}^2 S(\seq{\zeta}) \succeq -\alpha\, I_{mT}$
for all $\seq{\zeta} \in \R^{mT}$.
\end{assumption}

\begin{theorem}[Convexity of the Free Energy]
\label{thm:convexity}
Under \Cref{ass:separability,ass:matched,ass:semiconvexity},
when $\alpha < \uline{R} \define \lambda_{\min}(R)$,
$\Fbeta(\seq{u})$ is strictly convex for
$\beta \in (0,\, \bar{\beta})$, where
\begin{align}
    \bar{\beta} \define \frac{\uline{R} - \alpha}
    {\uline{R}\,\alpha\,\bar{\sigma}^2}, \qquad
    \bar{\sigma}^2 \define \lambda_{\max}(\Sigma_w).
    \label{eq:betamax}
\end{align}
When $\alpha = 0$ (i.e., $S$ is convex in $\seq{\zeta}$),
$\bar{\beta} = +\infty$.
\end{theorem}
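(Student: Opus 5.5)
The plan is to write the free energy as a log-marginal of a jointly log-concave density and apply the Prékopa--Leindler theorem (marginals of log-concave functions are log-concave). Under \Cref{ass:matched}, with $\seq{\zeta} = \seq{u} + \seq{w}$ and $\seq{w} \sim \dist{N}(0, \Sigma_w)$ (block-diagonal over time, still denoted $\Sigma_w$), I would first write
\begin{align*}
    \beta \Fbeta(\seq{u}) = -\log \int \exp\!\big(-V(\seq{\zeta}, \seq{u})\big)\, d\seq{\zeta} + \text{const},
\end{align*}
where
\begin{align*}
    V(\seq{\zeta}, \seq{u}) \define \beta S(\seq{\zeta}) + \tfrac{1}{2}\transp{(\seq{\zeta} - \seq{u})}\inv{\Sigma}_w(\seq{\zeta} - \seq{u}) + \tfrac{\beta}{2}\transp{\seq{u}} R\, \seq{u}.
\end{align*}
This uses the fact that the control cost $\Jtrj^u$ is deterministic in $\seq{u}$, so it can be moved inside the logarithm; this is where \Cref{ass:separability} enters. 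The integrability needed for the integral to be finite is the standing finiteness assumption of the paper.

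The key step is to show that $V$ is uniformly strongly convex in $(\seq{\zeta}, \seq{u})$. For a direction $(z, v)$, the Hessian quadratic form is
\begin{align*}
    \beta\, \transp{z}\, \partial^2_{\seq{\zeta}} S\, z + \transp{(z - v)}\inv{\Sigma}_w (z - v) + \beta\, \transp{v} R v .
\end{align*}
Using \Cref{ass:semiconvexity}, $\inv{\Sigma}_w \succeq I/\bar{\sigma}^2$ and $R \succeq \uline{R}\, I$, this form is bounded below by
\begin{align*}
    -\beta\alpha\norm{z}^2 + \norm{z - v}^2/\bar{\sigma}^2 + \beta \uline{R}\norm{v}^2 .
\end{align*}
The point of this reduction is that it avoids comparing non-commuting matrices through Schur complements. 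The lower bound is a quadratic form whose $2\times 2$ coefficient matrix acts isotropically on $\R^{mT}$, namely
\begin{align*}
    \begin{pmatrix} s - \beta\alpha & -s \\ -s & s + \beta\uline{R} \end{pmatrix}, \qquad s = 1/\bar{\sigma}^2 .
\end{align*}
This matrix is positive definite iff two conditions hold. The first is $s > \beta\alpha$. The second is $(s - \beta\alpha)(s + \beta\uline{R}) > s^2$, which reduces to $\beta\alpha\uline{R} < s(\uline{R} - \alpha)$, i.e.\ $\beta < \bar{\beta}$ from \eqref{eq:betamax}. Given $\alpha < \uline{R}$, the second condition implies the first. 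When $\alpha = 0$ the condition reads $\beta\uline{R}s > 0$, so $\bar{\beta} = +\infty$. Let $c > 0$ denote the smallest eigenvalue of this $2\times 2$ matrix. Then $\partial^2 V \succeq c\, I$ everywhere.

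To conclude strict convexity, I would split $V = W + \tfrac{c}{2}\norm{\seq{u}}^2$, where $W$ is still jointly convex because $V$ is $c$-strongly convex. By Prékopa--Leindler, $\seq{u} \mapsto -\log \int e^{-W(\seq{\zeta}, \seq{u})}\, d\seq{\zeta}$ is convex. Adding $\tfrac{c}{2}\norm{\seq{u}}^2$ makes $\beta\Fbeta$ $c$-strongly convex, hence strictly convex. Dividing by $\beta > 0$ preserves this.

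I expect the main obstacle to be the rigorous invocation of Prékopa--Leindler. It needs $e^{-W}$ integrable in $\seq{\zeta}$ for every $\seq{u}$, which follows from finiteness of the partition function, and it needs the conclusion to hold without extra smoothness; $f \in C^2$ (hence $S \in C^2$) suffices for the Hessian argument on $V$. As a fallback, and as a cross-check of the threshold, I would differentiate directly. This gives
\begin{align*}
    \partial^2 \Fbeta = \E_\mu[\partial^2 S] - \beta \Cov_\mu(\nabla S) + R,
\end{align*}
with $\mu$ the tilted Gaussian, and the covariance term would then be controlled by a Brascamp--Lieb inequality. That route is messier, so I would use it only to confirm that the threshold $\bar{\beta}$ matches.
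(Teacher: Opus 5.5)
Your proof is correct and reaches exactly the paper's threshold $\bar{\beta}$, but by a different mechanism.

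\textbf{The paper's route.} It moves $\seq{u}$ into the Gaussian via the likelihood ratio and writes $\Fbeta = g - h$, with $g$ quadratic and $h$ a log-moment-generating function that is convex by H\"older. It then computes $\partial^2_{\seq{u}} h = \tfrac{1}{\beta}\inv{\Sigma}_W \Cov_\mu[\seq{\zeta}]\inv{\Sigma}_W$ under the tilted measure $\mu$. That covariance is bounded with Brascamp--Lieb, and only afterwards is the bound scalarized with $\uline{R}$ and $\bar{\sigma}^2$.

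\textbf{Your route.} You never differentiate the free energy. You show the joint potential $V(\seq{\zeta},\seq{u})$ is uniformly strongly convex. You then transfer this to the marginal with Pr\'ekopa's theorem and a quadratic split.

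\textbf{What each buys.} Your route needs no differentiation under the integral sign. Finiteness of the integral is automatic, since $V(\cdot,\seq{u})$ is $(s-\beta\alpha)$-strongly convex. Only convexity of $S + \tfrac{\alpha}{2}\norm{\cdot}^2$ is really used. You also get an explicit strong-convexity modulus $c/\beta$.

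The paper's route gives an exact Hessian as a tilted second moment. This is the object the importance-weighted (MPPI-like) iteration in the numerical section works with. It also yields the anisotropic intermediate condition $R - \alpha\inv{\Sigma}_W\inv{(\inv{\Sigma}_W - \alpha\beta I)} \succ 0$, which can be less conservative than the scalar threshold.

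You can recover that same matrix condition in your framework by not scalarizing. The Schur complement of the $\seq{\zeta}\seq{\zeta}$-block of $\begin{pmatrix} \inv{\Sigma}_W - \alpha\beta I & -\inv{\Sigma}_W \\ -\inv{\Sigma}_W & \inv{\Sigma}_W + \beta R \end{pmatrix}$ is exactly $\beta\big(R - \alpha\inv{\Sigma}_W\inv{(\inv{\Sigma}_W - \alpha\beta I)}\big)$. Completing the square in $\seq{\zeta}$ before applying Pr\'ekopa makes this a lower bound on the marginal's Hessian. The two proofs are thus the Pr\'ekopa and Brascamp--Lieb faces of the same fact.

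Your fallback through $\Cov_\mu(\nabla S)$ is indeed messier. The paper sidesteps it with the likelihood-ratio trick, so that only $\Cov_\mu[\seq{\zeta}]$ appears.
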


\begin{proof}
Let $\Sigma_W \define I_T \otimes \Sigma_w$ and
$P_{\seq{u}} \define \dist{N}(\seq{u}, \Sigma_W)$ be the
distribution of $\seq{\zeta}$ under input $\seq{u}$.
The likelihood ratio
$P_{\seq{u}}/P_0 = \exp(\transp{\seq{u}}\inv{\Sigma}_W \seq{\zeta}
- \tfrac{1}{2}\transp{\seq{u}} \inv{\Sigma}_W \seq{u})$
yields the decomposition
\begin{align}
    F_\beta(\seq{u}) &= \underbrace{-\tfrac{1}{\beta}\log
    \func{E}_{P_0}[\exp(-\beta S(\seq{\zeta})
    + \transp{\seq{u}}\inv{\Sigma}_W
    \seq{\zeta})]}_{-h(\seq{u})}\\
    &\qquad+ \underbrace{\tfrac{1}{2} \transp{\seq{u}}
    \qty(\tfrac{1}{\beta}\inv{\Sigma}_W + R)
    \seq{u}}_{g(\seq{u})}.\nonumber
\end{align}
The quadratic $g$ is strictly convex, and $h$ is convex by
H{\"o}lder's inequality, so $F_\beta = g - h$ is a difference of
convex functions. Convexity of $F_\beta$ follows from
$\partial_{\seq{u}}^2 g \succ \partial_{\seq{u}}^2 h$.

Differentiating $h$ twice, its Hessian is
\begin{align}
    \partial_{\seq{u}}^2 h(\seq{u}) = \tfrac{1}{\beta}
    \inv{\Sigma}_W \mathrm{Cov}_{\mu}[\seq{\zeta}; \seq{u}]
    \inv{\Sigma}_W,
\end{align}
where $\mathrm{Cov}_{\mu}[\seq{\zeta}; \seq{u}]$ is the covariance
of $\seq{\zeta}$ under the tilted measure
$\mu(\seq{\zeta}; \seq{u}) \propto \exp(-\Psi(\seq{\zeta};
\seq{u}))$ with
\begin{align}
    \Psi(\seq{\zeta}; \seq{u}) \define \beta S(\seq{\zeta})
    + \tfrac{1}{2} \transp{\seq{\zeta}}\inv{\Sigma}_W \seq{\zeta}
    - \transp{\seq{u}}\inv{\Sigma}_W \seq{\zeta}.
\end{align}
Under \Cref{ass:semiconvexity},
$\partial_{\seq{\zeta}}^2 \Psi = \beta
\partial_{\seq{\zeta}}^2 S + \inv{\Sigma}_W \succeq
\inv{\Sigma}_W - \alpha \beta I_{mT}$,
which is positive definite when $\alpha \beta \bar{\sigma}^2 < 1$.
The Brascamp--Lieb inequality~\cite[Thm.~4.1]{Brascamp76}\footnote{If
$p(z) \propto \exp(-\ell(z))$ with
$\partial_z^2 \ell \succeq K \succ 0$, then
$\Cov_p(z) \preceq \inv{K}$.} then gives
$\mathrm{Cov}_{\mu}[\seq{\zeta}; \seq{u}] \preceq
\inv{(\inv{\Sigma}_W - \alpha \beta I_{mT})}$.
Substituting into $\partial_{\seq{u}}^2 F_\beta = \partial_{\seq{u}}^2 g - \partial_{\seq{u}}^2 h$
and using the identity $I - \inv{X}Y = \inv{X}(X - Y)$ with
commuting factors:
\begin{align}
    \partial_{\seq{u}}^2 F_\beta(\seq{u})
    &\succeq R - \alpha\inv{\Sigma}_W
    \inv{(\inv{\Sigma}_W - \alpha\beta I_{mT})}\nonumber\\
    &\succeq \qty(\uline{R} - \frac{\alpha}
    {1 - \alpha\beta\bar{\sigma}^2}) I_{mT} \succ 0
    \iff \beta < \bar{\beta}.\nonumber
\end{align}
The last step uses the identity,
\begin{align}
    \lambda_{\max}\!\big(\inv{\Sigma}_W
\inv{(\inv{\Sigma}_W - \beta\alpha I)}\big)
= (1-\beta\alpha\bar{\sigma}^2)^{-1},
\end{align} as commuting symmetric
matrices share an eigenbasis.
\end{proof}

Restricting the supremum in~\cref{eq:trfe} to
$(0, \bar{\beta})$ preserves validity—it can only weaken the
bound—and loses no tightness when $\opt{\beta} \leq \bar{\beta}$.

\begin{remark}[Scope of \Cref{ass:matched,ass:semiconvexity}]
\label{rem:scope}
\Cref{ass:matched} covers control-affine systems
$x_{t+1} = f_0(x_t) + B(x_t)(u_t + w_t)$ with actuator noise,
including the Dubins car in \Cref{sec:dubins_car}. It excludes
unmatched disturbances that enter independently of the control
input. \Cref{ass:semiconvexity} holds with $\alpha = 0$ for
linear dynamics with convex costs. For nonlinear dynamics with
convex state cost, $\alpha$ depends on the dynamics curvature
$\partial^2 f$ and is small near a tracking reference; it is
estimable from open-loop rollouts (see \Cref{sec:dubins_car}).

\end{remark}

\section{Numerical Example}
\label{sec:example}
\begin{figure*}[t]
    \centering
    \begin{minipage}[t]{0.48\textwidth}
        \centering
        \includegraphics[width=\textwidth]{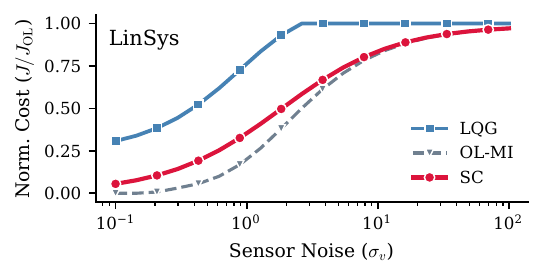}
    \end{minipage}\hfill
    \begin{minipage}[t]{0.48\textwidth}
        \centering
        \includegraphics[width=\textwidth]{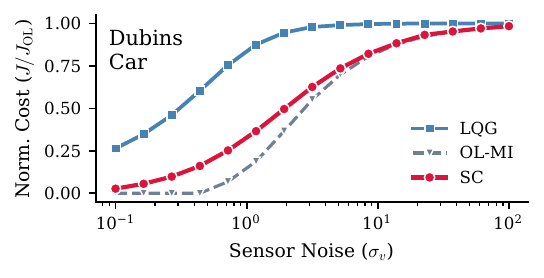}
    \end{minipage}
    \vspace{-1em}
    \caption{Normalized cost vs.\ sensor noise. \textbf{Left:} Scalar
    LQG (optimal cost $\opt{J}$ from Riccati equation).
    \textbf{Right:} Dubins car tracking a figure-eight reference
    (heading unobserved). All costs reported as per-step rates $J/T$
    normalized by $\Jol/T$. \textbf{LQG.} Optimal (left) or
    EKF+LQR (right; upper bound on $\opt{J}$). \textbf{SC.}
    Self-consistent TRFE bound (\Cref{thm:sc}, lower bound on
    $\opt{J}$). \textbf{OL-MI.} TRFE bound with open-loop MI budget.
    The SC bound captures a substantial fraction of $\opt{J}$ at
    moderate-to-high noise in both settings; OL-MI is substantially
    looser at all noise levels.}
    \label{fig:bounds}
    \vspace{-1.75em}
\end{figure*}

Computing the bound requires estimating the free energy
$\Fbeta(\seq{u})$ over a grid of inverse temperatures and then
solving the self-consistent fixed point. The partition function in
$\Fbeta$ is estimated via $N$ Monte Carlo rollouts of the open-loop
dynamics, drawn once and reused across all $\beta$ values and
bisection iterates. Under \Cref{ass:matched}, the minimization over
$\seq{u}$ reduces to an importance-weighted fixed-point
iteration---a deterministic analogue of
MPPI~\cite{Williams18}---that converges to the global minimum when
$\beta < \bar{\beta}$ (\Cref{thm:convexity}). 

\Cref{alg:trfe}
summarizes the procedure. The noise samples $\seq{\xi}^{(1:N)}$ are drawn once and reused across all inverse temperatures and bisection iterates, ensuring that $\Phi(J)$ is a deterministic function of $J$ for stable root-finding. For the open-loop MI bound, $\Phi$ is evaluated at the fixed budget
$\bar{I}_{\mathrm{ol}} = \bar{I}(\Jol)$. For the self-consistent
bound, bisection over $[0, \Jol]$ finds the root of
$J = \Phi(J)$ with $\bar{I}(J)$ given by the
water-filling certificate~\eqref{eq:mi_opt}.

The convexity threshold $\bar{\beta}$ depends on the semiconvexity
constant $\alpha$ (\Cref{ass:semiconvexity}). We estimate $\alpha$ by sampling $N_\alpha$ noise realizations $\seq{\xi}^{(i)}$, computing the Hessian $\nabla^2_{\seq{\zeta}} S$ at each $\seq{\zeta}^{(i)} = \seq{u}_{\mathrm{nom}} + \seq{\xi}^{(i)}$ via automatic differentiation, and setting $\hat{\alpha} = \max(0,\, -\min_i \lambda_{\min}^{(i)})$. Because $\hat{\alpha}$ maximizes over finitely many sampled inputs $\seq{\zeta}^{(i)}$, while the true $\alpha$ is a supremum over $\seq{\zeta} \in \R^{mT}$, the estimate satisfies $\hat{\alpha} \leq \alpha$ and the resulting $\hat{\bar{\beta}} \geq \bar{\beta}$. The bound (\Cref{thm:trfe,thm:sc}) is valid for any $\beta > 0$, but a suboptimal solution to the inner minimization can violate the lower bound guarantee. Convexity certification ensures the computed bound is valid. However, the semiconvexity constant $\alpha$ is estimated from finite samples, and is therefore approximate. A rigorous finite-sample certificate is deferred to future work.

\begin{figure}[t]
\vspace{-0.5em}
\begin{algorithm}[H]
\caption{Estimating the TRFE Bound}
\label{alg:trfe}
\begin{algorithmic}
\REQUIRE Traj. cost $\Jtrj$, CI cert. $\bar{I}$, samples $N$, grid $\{\beta_k\}_{k=1}^K$.
Draw noise $\seq{\xi}^{(1:N)}$.
\FOR{$k = 1, \ldots, K$ {\small(ascending $\beta$ with warm-start)}}
    \STATE $\opt{F}_k \gets \min_{\seq{u}}\{F_{\beta_k}^x(\seq{u}) + \Jtrj^u(\seq{u})\}$.
    \hfill{\small(inner opt.\ in~\eqref{eq:trfe})}
\ENDFOR
\RETURN $\Phi(\bar{I}) \define \max_k\{\opt{F}_k - \bar{I}/\beta_k\}$.\hfill{\small(outer opt.\ in~\eqref{eq:trfe})}
\end{algorithmic}
\end{algorithm}
\vspace{-2.5em}
\end{figure}

\subsection{Scalar Linear-Quadratic-Gaussian}
\label{sec:lqg}

We first validate the bound on a scalar linear--quadratic--Gaussian
(LQG) problem where the optimal cost $\opt{J}$ is known in closed
form. Despite its simplicity, the TRIP bound~\cite{Majumdar22, Majumdar23} \emph{cannot be applied to
this problem}, as it requires both bounded costs and a finite control
space.

The system is,
\begin{align}
    x_{t+1} = 0.95\, x_t + u_t + w_t, && y_t = x_t + v_t,
\end{align}
with $w_t \sim \dist{N}(0, 0.01)$,
$v_t \sim \dist{N}(0, \sigma_v^2)$, $x_0 = 0$, and horizon
$T = 100$. The per-step cost is $c(x,u) = x^2 + u^2$, and the
optimal cost is computed via the discrete-time Riccati equation
(LQG is optimal for this system). For linear-quadratic systems, the free energy is readily computed in closed-form~\cite{whittle1990risk}, so no sampling is necessary.

\Cref{fig:bounds}~(left) shows the normalized cost as a function of
$\sigma_v$. Since $\opt{J}$ is known exactly, the gap between the
LQG and SC curves directly measures the bound's conservatism. The SC
bound captures over half of $\opt{J}$ by $\sigma_v \approx 2$ and
over 80\% by $\sigma_v \approx 10$. As in the Dubins example, the
OL-MI bound is substantially looser at all noise levels.

\subsection{Dubins Car Tracking}
\label{sec:dubins_car}

We validate the bound on a nonlinear, nonholonomic tracking problem
where domain-specific bounds (e.g., Bode integrals) do not apply.

\textbf{System.}
A fixed-velocity Dubins car with state $x = (p_x, p_y, \theta)$
(position and heading) and scalar control $u = \omega$ (turn rate)
tracks a figure-eight reference at constant speed $v = 1$ over
$T = 100$ steps ($\Delta t = 0.1\,\text{s}$). The dynamics are
\begin{align}
    x_{t+1} = x_t + \Delta t \cdot
    \transp{[v\cos\theta_t\ \; v\sin\theta_t\ \; \omega_t + w_t]},
\end{align}
where $w_t \sim \dist{N}(0,\, \sigma_w^2)$, $\sigma_w = 0.1$.
Noise enters only through the steering channel
(\Cref{ass:matched} with $m = 1$).
The sensor observes only position---heading is unobserved:
\mbox{$y_t = Hx_t + v_t$}, \mbox{$H = [I_2 \;\; 0]$},
\mbox{$v_t \sim \dist{N}(0,\, \sigma_v^2 I_2)$}.

\textbf{Task.}
The per-step cost penalizes position tracking error, heading error,
and control effort:
$c(x,u) = \|p - p^{\mathrm{ref}}_t\|^2
+ 0.1\,(1 - \cos(\theta - \theta^{\mathrm{ref}}_t))
+ \tfrac{1}{2}(\omega - \omega^{\mathrm{nom}}_t)^2$,
where $\omega^{\mathrm{nom}}$ is the feedforward steering rate.
The heading cost is smooth and $2\pi$-periodic. For the
water-filling certificate (\Cref{prop:linear_mi}), the quadratic
state cost weight is $Q = \operatorname{diag}(1,\, 1,\, 0)$,
applied to the position components only. Since $Q$ is rank-deficient, the water-filling certificate~\eqref{eq:mi_opt} is computed on the range of $Q$, yielding a valid but potentially conservative MI budget that excludes heading information. The non-quadratic heading
term is retained but does not contribute to the MI certificate. 

\textbf{Bound Computation.}
The free energy is estimated from $N = 5 \times 10^4$ Monte Carlo
rollouts over a grid of $K = 60$ inverse temperatures following
\Cref{alg:trfe}. The estimated semiconvexity constant is
$\hat{\alpha} = 0.44$, yielding $\hat{\bar{\beta}} = 127$. The optimal $\opt{\beta}$ fell below this threshold at all sensor noise levels, providing an approximate finite-sample convexity certificate. All costs are normalized by the open-loop cost $\Jol/T$ so that the range is from $[0, 1]$.\footnote{In the Dubins example, $\Jol$ is approximated by evaluating the sample-average cost at the nominal feedforward input $\seq{u}_{\mathrm{nom}}$. This may overestimate the true infimum, but $\Jol$ enters only as the bisection bracket for \Cref{thm:sc}---the bound values $\{F^{\star}_k\}$ are independently optimized at each $\beta_k$---so the computed $\Jsc$ is unaffected.}

\textbf{Baseline.}
The upper bound on the value of $\opt{J}$ (``LQG'' in \Cref{fig:bounds}~(right)) is the
cost achieved by an extended Kalman filter (EKF) paired with time-varying LQR linearized about the reference, simulated on the nonlinear dynamics. 

\textbf{Results.} \Cref{fig:bounds}~(right) shows the normalized cost as a function of sensor noise $\sigma_v$. The optimal cost $\opt{J}$ lies between the LQG curve (which is an upper bound, since it is one feasible policy) and the SC curve (a lower bound via \Cref{thm:sc}); the gap between them quantifies current uncertainty about $\opt{J}$. At low noise ($\sigma_v = 0.1$), the self-consistent bound accounts for approximately 10\% of the LQG cost while the open-loop MI bound is vacuous (negative), illustrating the importance of the self-consistent mechanism. The SC bound is tighter than OL-MI because it uses an adaptive information budget coupled to the achievable cost, whereas OL-MI uses a fixed budget computed from the uncontrolled state distribution---which vastly overestimates the required sensor information when feedback is effective. As $\sigma_v \to \infty$, all curves converge to the open-loop cost. Observations become uninformative in this limit, so feedback provides no benefit over open-loop control and the bound is tight. The persistent gap between the LQG and SC curves at low-to-moderate noise suggests room for improvement over the linearized controller, which is only optimal in the linear-quadratic setting.

\section{Conclusion}
\label{sec:conclusion}
We derived a free energy lower bound on the minimum expected cost of
any causal feedback controller under partial observations and
introduced a self-consistent refinement that exploits the coupling
between achievable cost and sensor information. The resulting
fixed-point equation has a unique solution, computable by bisection,
that is provably tighter than the open-loop estimate. Under matched actuator noise, the free energy minimization is certifiably convex and numerically computable. On both linear and nonlinear problems, the self-consistent bound captures a substantial fraction of the
achievable cost. Together, these results provide a practical tool for certifying task feasibility and guiding sensor
selection for systems where domain-specific bounds do not apply.

Several extensions are natural. A rigorous finite sample analysis involving concentration inequalities is the primary next step, and has already been achieved for the TRIP bounds \cite{Majumdar22, Majumdar23}. Incorporating the Markov
structure of the dynamics into the MI
optimization~\eqref{eq:mi_opt}---e.g., via a covariance
propagation constraint---would tighten the CI certificate without
affecting the validity of the bound. The free energy formulation
also shares structural parallels with the Sagawa--Ueda generalization
of the second law~\cite{Sagawa10}, suggesting a deeper connection
between the thermodynamic cost of control and
information processing that we intend to develop in future work.

\bibliographystyle{IEEEtran}
\bibliography{IEEEabrv, bibliography}

\end{document}